\algnewcommand\AND{\textbf{ and }}
\pgfplotsset{compat=newest}
\def\dom{\Omega}
\def\domS{\Omega_\mathrm{S}}
\def\domM{\Omega_\mathrm{M}}
\def\bou{\partial\Omega}
\def\bouE{B_\mathrm{ext}}
\def\bouI{B_\mathrm{int}}
\def\Oc{\mathcal{O}}
\def\e{{\bm{\epsilon}}}
\def\g{{\bm{g}}}
\def\w{{\bm{w}}}
\def\x{{\bm{x}}}
\def\z{{\bm{z}}}
\def\Mb{{\bm{M}}}
\def\Mbh{{\bm{M}^{1/2}}}
\def\Mbih{{\bm{M}^{-1/2}}}
\def\N{\mathbb{N}}
\def\R{\mathbb{R}}
\def\Cc{\mathcal{C}}
\def\Fc{\mathcal{F}}
\def\Nc{\mathcal{N}}
\def\Jc{\mathcal{J}}
\def\Oc{\mathcal{O}}
\def\Sc{\mathcal{S}}
\def\Uc{\mathcal{U}}
\def\Om{\Omega}
\def\Finf{\Fc}
\def\Ffin{F}
\newcommand{\inner}[3]{{\left\langle #1, #2 \right\rangle_{#3}}}
\newcommand{\dual}[3]{{\left( #1, #2 \right)_{#3^*,#3}}}
\def\Cp{\Cc_{0}}
\def\Cph{\Cc_{0}^{1/2}}
\def\pst{{\mathrm{post}}}
\def\Cpst{\Cc_{\pst}}
\def\Gmn{\Gamma}
\def\Gmni{\Gamma^{-1}}
\def\Gmnih{\Gamma^{-1/2}}
\def\ResS{R_0}
\DeclareMathOperator{\Diag}{Diag}
\DeclareMathOperator{\tr}{tr}
\renewcommand{\d}{\,\mathrm{d}}
\newcommand{\compt}{\hookrightarrow\mathrel{\mspace{-15mu}}\rightarrow}
\newcommand{\blue}[1]{\textcolor{blue}{#1}}
\newcommand{\abstr}{Within the field of optimal experimental design, \emph{sensor placement} refers to the act of finding the optimal locations of data collecting sensors, with the aim to optimise reconstruction of an unknown parameter from finite data. In this work, we investigate sensor placement for the inverse problem of reconstructing a heat source given final time measurements. Employing forward and adjoint analysis of this PDE-driven model, we show how one can leverage the first author's recently invented \emph{redundant-dominant $p$-continuation} algorithm to obtain binary A-optimal sensor placements also for this time-dependent model.}
\begin{document}

\title*{FEM-based A-optimal sensor placement for heat source inversion from final time measurement\\
}
\titlerunning{FEM-based A-optimal sensor design for heat}
\author{Christian Aarset\orcidID{0000-0001-8163-9305}\\ and Tram Thi Ngoc Nguyen\orcidID{0000-0002-7245-7611}}
\institute{To be submitted to the Proceedings of Domain Decomposition Methods in Science and Engineering XXIX. \\[1ex]
Christian Aarset \at University of G\"ottingen - Institute for Numerical and Applied Mathematics, Lotzestr. 16-18, D-37083 G\"ottingen, Germany, \email{c.aarset@math.uni-goettingen.de}
\and Tram Thi Ngoc Nguyen \at Max Planck Institute for Solar Systems Research, Justus-von-Liebig-Weg 3, 37077 G\"ottingen, Germany \email{nguyen@mps.mpg.de}}
%
%
\maketitle

\abstract*{\abstr}

\abstract{\abstr}



\section{Experimental design for heat source identification}\label{sec:setup}

This article concerns itself with optimal experimental design (OED) for the best reconstruction in the inverse heat source problem. The heat equation allows us to model the spread of heat over time from a heat source and through a spatial domain, by ways of solving a time-dependent partial differential equation (PDE). Inverting the heat equation then serves to reconstruct and identify the unknown heat source, which is of great interest in controlling and optimizing heating and cooling processes for various applications, e.g.~in materials science.

A natural question arises: what is the optimal configuration of sensors, or \emph{optimal design} for data collection, yielding the best possible reconstruction of the unknown heat source? Indeed, it is generally only possible to obtain local approximations of the dispersing heat at a finite number of sample locations; thus, the choice of measurement points is crucial to the quality of the source reconstruction. 

We assume that in the spatial domain, a total of $m$ different fixed candidate locations are proposed for sensor placement, while due to budget constraints, one can place no more than $m_0<m$ of all potential sensors. One strategy would be to repeat the experiment $\binom{m}{m_0}$ times and choose the sensor configuration that yielded the best reconstruction in practice. However, this quickly becomes unfeasible for even modest values of $m$ and $m_0$, due to combinatorial scaling; moreover, it is desirable to fix the design a priori, avoiding the need for a test-trial phase for data collection.

\begin{figure}
    \centering
    \includegraphics[width=0.45\linewidth]{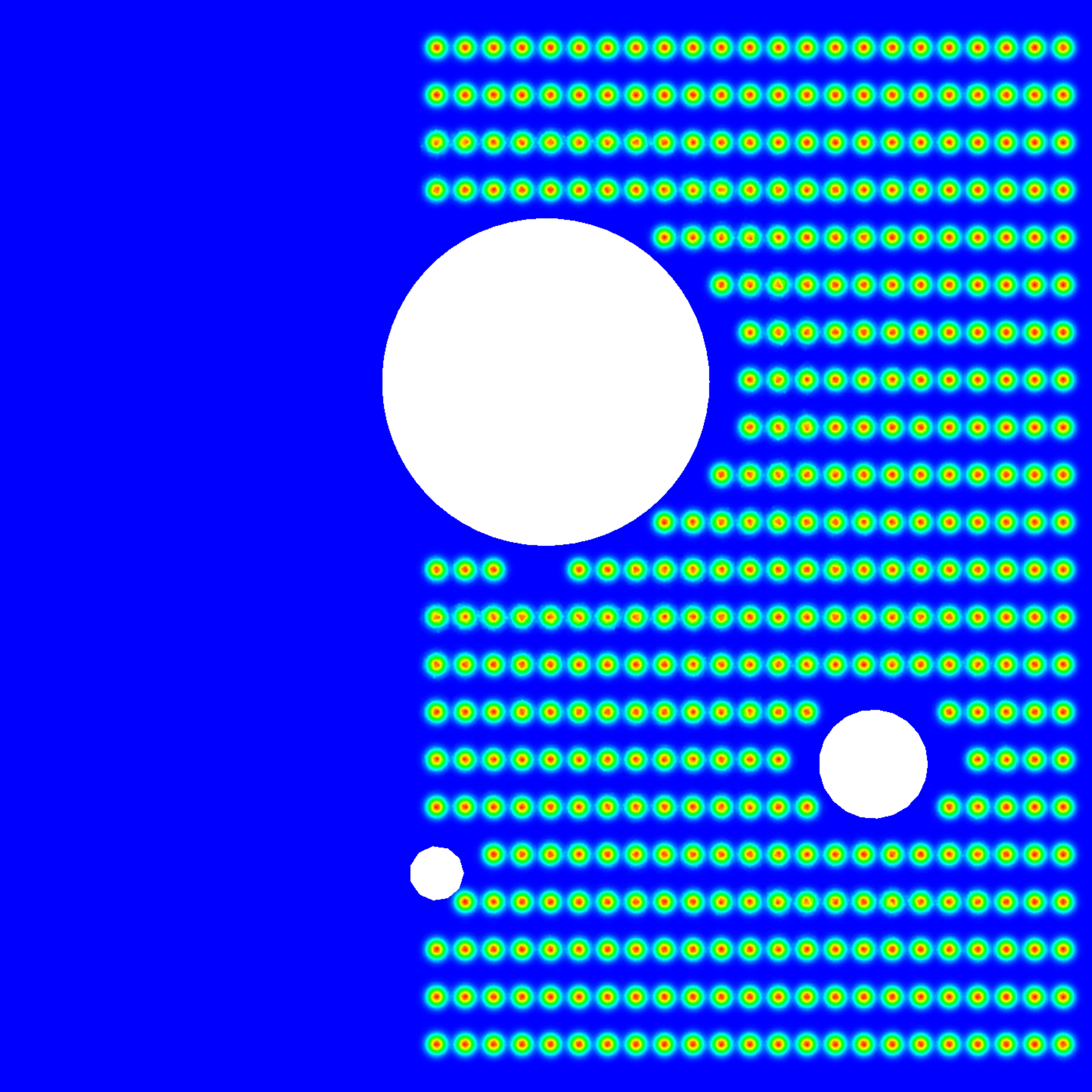}
    \includegraphics[width=0.45\linewidth]{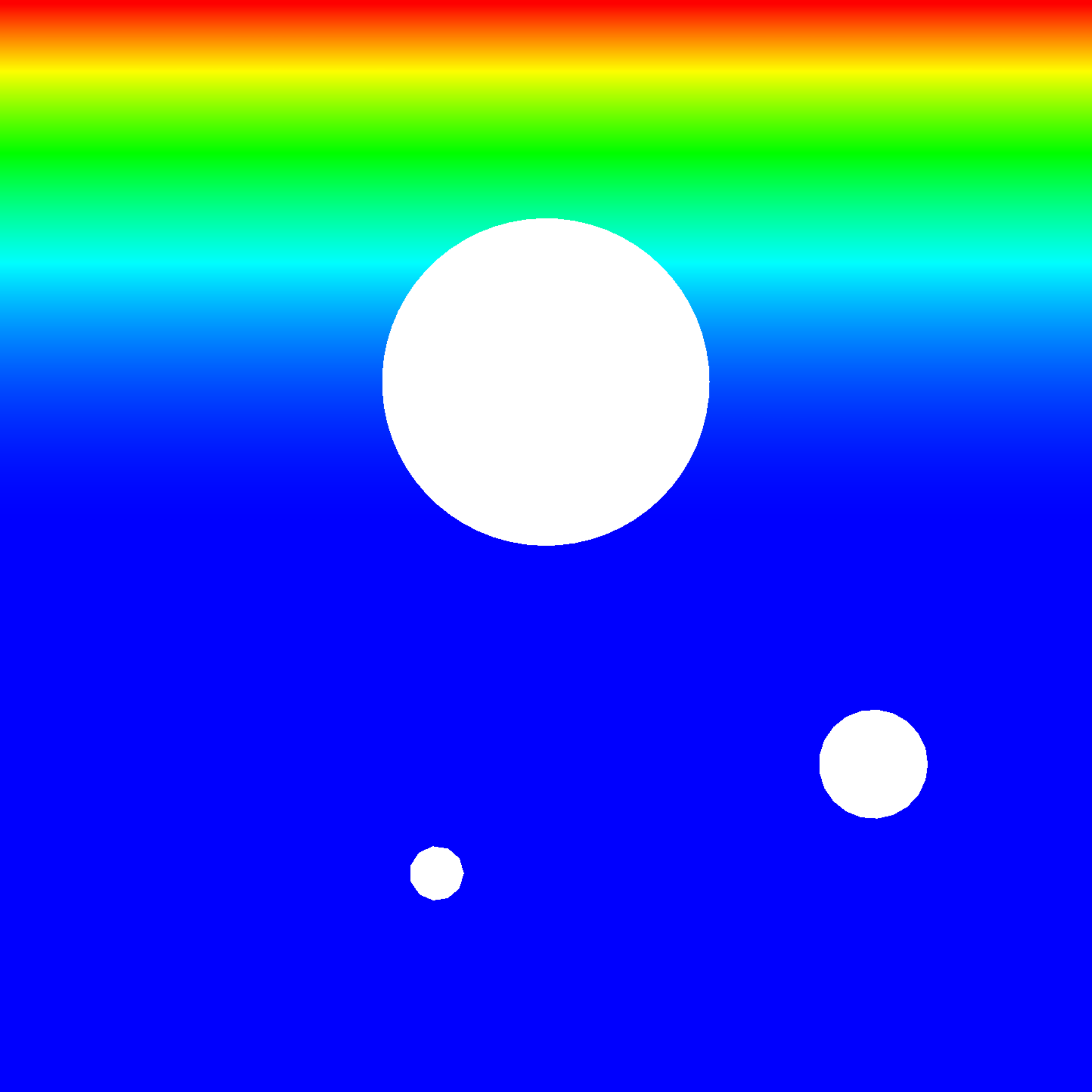}
    \caption{Left: Source domain , full domain and grid of all candidate sensor locations. Right: Spatially dependent part of the diffusion parameter $a$ in \eqref{eq}.} 
    \label{fig:grid}
\end{figure}

As we consider a final time measurement scenario, we assume that each sensor provides only a single scalar observation of the (local approximation to the) heat at its placement location at a fixed measurement time $T>0$.

In what follows, we will present an adaptation of the low-rank approximation-based \emph{redundant-dominant $p$-continuation algorithm} recently invented by the first author in \cite{Aarset} to obtain A-optimal designs, that is, sensor placements that minimise the trace of the posterior covariance, or, equivalently, that minimise average posterior pointwise uncertainty in the reconstruction, see Section \ref{sec:A-optim}. For a deeper discussion of this and related topics, we refer to \cite{Alexanderian}. 

In so doing, we demonstrate the applicability of the results in \cite{Aarset} to time-dependent PDEs, showcasing its relevance to a broad class of problems. To this end, several key elements must be taken into account: well-posedness of the forward heat model, efficiency of the backpropagator and the adaptation of the finite element method (FEM) framework to the time-dependent PDE. These aspects, and their introduction to the A-optimal computational framework, form the contribution of this article.

\textbf{Notation.} 
Finite-dimensional vectors are consistently denoted by boldface lowercase letters (i.e.~$\g$, $\w$), both scalars and function space variables are denoted by plainface lowercase letters (e.g.~$p$, $s$), matrices are denoted by plainface uppercase letters (e.g.~$\Ffin$, $\Gmn$), while operators with infinite-dimensional inputs or outputs are denoted by calligraphic uppercase letters (e.g.~$\Sc$, $\Oc$).

\section{A-optimal sensor placement} \label{sec:A-optim}
\textbf{Design-dependent forward map.} The \emph{sensor placement problem} can be viewed as the problem of finding the experimental design $\w\in\{0,1\}^m$, $m\in\N$ allowing for the best reconstruction of a source $s$ in some Hilbert space $X$, given the design-dependent forward map
\begin{align}\label{Fw}
\Finf_\w: X\to \R^m, \qquad \Finf_\w: = \Diag(\w)\Finf, \qquad \Finf:X\to\R^m,
\end{align}
where $\Diag(\w):\R^m\to\R^m$ is the multiplication operator, represented as a diagonal matrix. As $\w \in \{0, 1\}^m$, the design $\w$ acts as a mask on the data, representing the act of sensor placement. The quantity $m$ thus represents the number of candidate locations where the experimenter might elect to place a sensor.

\textbf{A-optimality.}
For Bayesian linear inverse problems, an \emph{A-optimal design} can be expressed as a sensor placement optimising the reconstruction by minimising the trace of the posterior covariance. We recall from \cite{AttiaConstantinescu, Stuart} that if the unknown $s$ is endowed with the Gaussian prior $\Nc(s_0,\Cc_0)$, $s_0\in X$, $\Cc_0:X^*\to X$, and if the measurement noise is corrupted by additive Gaussian noise $\e\sim\Nc(0,\Gmn)$ in $\R^m$, where $\Gmn\in\R^{m\times m}$ is diagonal and positive definite, then for each design $\w$, the posterior distribution of $s$ given observed noisy data $\g\in\R^m$ is $s|\g\sim\mathcal{N}(s_\pst(\w),\Cpst(\w))$, with
\begin{align}
    s_\pst(\w) & := s_0 + \Cpst(w)\Finf_w^*\Gmni\left(\g-\Finf_\w s_0\right), \\
    \Cpst(\w) & := \left(
        \Finf_w^*\Gmnih\Diag(\w)\Gmnih\Finf_w+\Cc_0^{-1}
    \right)^{-1}.
\end{align}

The A-optimal objective can then be expressed as the trace of the posterior covariance, being a function of the design $\w$, that is,
\begin{align}\label{a-criterion}
    \Jc: \w\in\R^m
    \mapsto \tr\left(
        \Cpst(\w)
    \right) =
    \tr\left(\left(
        \Finf_w^*\Gmnih\Diag(\w)\Gmnih\Finf_w+\Cc_0^{-1}
    \right)^{-1}\right);
\end{align}
which is well-defined for all non-negative and sufficiently small negative values of $\w$. An A-optimal design $\w^*$ using no more than $m_0$ sensors then satisfies 
\[
    \w^*\in\mathrm{argmin}_{\w\in\{0,1\}^m, \|\w\|_0\leq m_0}\Jc(\w).
\]


\section{Heat model and the forward map}\label{sec:heat}
\textbf{Linear heat equation. }
Consider the room $\dom:=(-1,1)^2$ with boundary $\bou=\bouI\cup\bouE$, where both the surfaces $\bouI$ of the rods permeating the domain and the exterior boundaries $\bouE$ are thermally insulated, which we model via homogeneous Neumann boundary conditions. 

The heat distribution $u\in\Uc$ in the room from a spatial-variant heater $s$ is governed by the linear parabolic equation with varying boundary conditions
\begin{alignat}{2}\label{eq}
\begin{cases}
&\dot{u} - \nabla\cdot(a\nabla u)= s \quad\text{in $\dom\times(0,T)$},\\
&\partial_n u  = 0 \quad\text{on $\bou\times[0,T]$,}\\
&u(t=0)  = 0 \quad \text{on \,$\dom$,} 
\end{cases}
\end{alignat}
where the spatially dependent source $s\in L^2(\domS)=:X$ is implicitly extended by zero outside $\domS$. 
Here, $\dot{u}$ denotes the time derivative of $u$, the diffusion parameter is  $a(\x)=1+(5\x_2^5+\x_2^3)\chi_{\x_2\geq 0}\in C(\overline{\dom})$, see Figure \ref{fig:grid}, and $n$ indicates the outward normal vectors at the boundaries.

The PDE \eqref{eq} is well-posed in the sense that for any source $s\in X$ (and initial condition in $L^2(\dom)$), there exists a unique weak solution to \eqref{eq}  with
\begin{align}\label{wellposed}
 u\in\Uc:= L^2(0,T;H^1(\dom))\cap H^1(0,T;H^1(\dom)^*),\quad \|u\|_\Uc\leq C\|s\|_X 
\end{align}
for some  constant $C$ (c.f.~\cite{Evans, TCC}). We thus define the linear source-to-final-state map
\begin{align}\label{S}
    \Sc:X\to L^2(\dom), \quad s\mapsto u(\cdot,T)
\end{align} 
which is well defined due to compactness of the embedding $\Uc\compt C(0,T;L^2(\dom))$ \cite[Lemma 7.3]{Roubicek}.

\textbf{Finite measurement. } A central conceit of sensor placement is the idea that the state can only be measured by representing it as a -- possibly rather small -- vector of observables. As elaborated upon in \cite{Aarset}, this is ubiquitously carried out by pairing with appropriate dual elements. Explicitly, this requires composition with the \emph{finite measurement operator} $\Oc: L^2(\dom)\to\R^m$, given as
\[
    \Oc u := \left(
        \inner{o_k}{u}{L^2(\dom)}
    \right)_{k=1}^m \in \R^m
\]
for all $u\in L^2(\dom)$. Here, the functions $(o_k)_{k=1}^m\subset L^2(\dom)$ can be thought of as \emph{sensors}, and typically represent sufficiently smooth approximations of Dirac delta functions centered away from the source domain. This allows $\Oc$ to act via a series of approximate pointwise measurements of the final time state away from the source domain. In practice, this can be achieved by taking a grid of measurement points $(\x_k)_{k=1}^m\subset\dom\backslash\domS$ with sufficient distance from $\domS$, and then letting each $o_k\in L^2(\dom)$ be only implicitly defined by the action of a finite element software's approximation of pointwise evaluation at $\x_k$.

This formulation finally leads us to the \emph{source-to-observable} map $\Finf:X\to\R^m$,
\[
    \Finf s := \left[\Oc\circ\Sc\right]s = \left(
        \inner{o_k}{\Sc s}{L^2(\dom)}
    \right)_{k=1}^m \in \R^m \qquad \text{for all $s\in X$.}
\]

\section{FEM-based A-optimality via the RedDom-$p$  algorithm}

The \emph{redundant-dominant $p$-continuation algorithm} introduced by the first author in \cite{Aarset} stands as a robust, high-performing algorithm for sensor placement. 
It moreover leverages global optimality theory \cite[Thm.~3]{Aarset} to mark certain sensors as \emph{dominant}, i.e.~always on, and \emph{redundant}, i.e.~always off, enabling significant dimensionality reduction of the OED problem.

\begin{algorithm}[h!]
\caption{Binary OED by $p$-continuation via redundant-dominant classification}\label{alg:p_cont}
\begin{algorithmic}[1]
\Require Initialisation as globally optimal non-binary design $\w^{0}:=\w^*$, power $p=1$, iteration $i=0$, continuation parameter $\delta\in(0,1)$
\While {$\w^i$ has entries significantly different from $0$ and $1$}
	\State $p \gets (1-\delta)p$ and $i\gets i+1$
	\State Solve the non-convex constrained optimization problem (e.g.~via the SLSQP algorithm)
	\begin{align} \label{eq:p_cont:objective}
	\Jc^p(\z) & := \Jc(\z^{1/p}), \\
	\nabla \Jc^p(\z) & = \frac{1}{p}\nabla\Jc(\z^{1/p})\z^{1/p-1}, \label{eq:p_cont:grad}\\
	0 \leq \z_k \leq 1 \qquad & \text{for all $k\in\N$, $k\leq m$}, \qquad
	\sum_{k=1}^m \z_k \leq m_0, \label{eq:p_cont:constr}
	\end{align}
	initialised at $\z := (\w^{i-1})^p$ and keeping dominant and redundant indices ($\w^*_k=1$, resp.~$\w^*_k=0$) fixed, returning $\z^i$
	\State $\w^i \gets (\z^i)^{1/p}$
\EndWhile
\Ensure Binary design $\overline{w}^{m_0}:=\w^i$ as approximate binary optimal design.
\end{algorithmic}
\end{algorithm}

This algorithm requires the adjoint of the  source-to-observable map $\Finf$, which is derived in the following. 

\begin{lemma}[Backpropagator] 
The Banach space adjoint $\Finf^*:\R^m \to X^*$ is given by
\begin{align}\label{eq-adj}
& \Finf^*g=\int_0^T \ResS z\, dt \quad\text{where }
    \begin{cases}
-\dot{z} - \nabla\cdot(a\nabla z)= 0 \quad\text{in $\dom\times(0,T)$},\\
 \partial_n z  = 0 \text{ on $\bou\times[0,T]$,}\\
z(t=T)  = \sum_{k=1}^m g_ko_k \quad \text{on \,$\dom$},
\end{cases}
\end{align}
with $\ResS:L^2(\dom)\to L^2(\domS)$ being the restriction operator.
\end{lemma}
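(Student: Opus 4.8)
The plan is to verify the defining identity of the Banach space adjoint,
$$\dual{\Finf^{*} g}{s}{X} = \inner{g}{\Finf s}{\R^m}\qquad\text{for all }s\in X,\ g\in\R^m,$$
and to recast its right-hand side as a pairing of $s$ against the claimed field via a space--time integration by parts. First I would unfold the right-hand side: writing $u:=\Sc s$ for the forward solution of \eqref{eq} and using $\Finf=\Oc\circ\Sc$ together with the definition of $\Oc$,
$$\inner{g}{\Finf s}{\R^m}=\sum_{k=1}^m g_k\inner{o_k}{u(\cdot,T)}{\Ls}=\inner{\textstyle\sum_{k=1}^m g_k o_k}{u(\cdot,T)}{\Ls}.$$
Setting the terminal datum $z_T:=\sum_{k=1}^m g_k o_k\in\Ls$, the task reduces to rewriting $\inner{z_T}{u(\cdot,T)}{\Ls}$ as an integral against $s$.

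To this end I would introduce the backward problem \eqref{eq-adj} with terminal condition $z(T)=z_T$; its well-posedness, with $z$ in the forward solution class $\Uc$, follows from that of \eqref{eq} under the time reversal $t\mapsto T-t$, which converts the backward heat equation into a forward one with $L^2(\dom)$ data. The core step is then to pair the forward equation $\dot u-\nabla\cdot(a\nabla u)=s$ with $z$ over $\dom\times(0,T)$: integrating the time-derivative term by parts (using $u(0)=0$) and applying Green's identity twice in space (the boundary integrals vanishing by the homogeneous Neumann conditions $\partial_n u=\partial_n z=0$) yields
$$\inner{z_T}{u(\cdot,T)}{\Ls}-\int_0^T\!\int_\dom u\,\bigl(\dot z+\nabla\cdot(a\nabla z)\bigr)\,dx\,dt=\int_0^T\!\int_\dom s\,z\,dx\,dt.$$
Since $z$ solves $-\dot z-\nabla\cdot(a\nabla z)=0$, the volume integral on the left vanishes, so $\inner{z_T}{u(\cdot,T)}{\Ls}=\int_0^T\!\int_\dom s\,z\,dx\,dt$. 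As $s$ is extended by zero outside $\domS$, the right-hand side equals $\int_0^T\inner{s}{\ResS z}{L^2(\domS)}\,dt=\inner{s}{\int_0^T\ResS z\,dt}{L^2(\domS)}$, and comparison with the adjoint identity gives $\Finf^{*}g=\int_0^T\ResS z\,dt$ as claimed.

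The main obstacle is not this formal computation but its rigorous justification at the level of weak solutions: with $u\in\Uc$ and terminal data $z_T$ only in $L^2(\dom)$, the products and the endpoint and boundary terms must be read through the correct dualities between $H^1(\dom)$ and $H^1(\dom)^*$ in space and through the integration-by-parts formula valid on $\Uc$ in time, so that the traces $u(\cdot,T)$ and $z(\cdot,T)$ and the temporal endpoints are meaningful. I would secure this by first carrying out the identity for smooth terminal data and a dense class of sources, bounding every term via the estimate \eqref{wellposed} and its backward counterpart, and then passing to the limit by continuity; equivalently, one may argue directly from the weak formulations of \eqref{eq} and \eqref{eq-adj}, using $z$ and $u$ respectively as admissible test functions.
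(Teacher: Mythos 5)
Your proposal is correct and follows essentially the same route as the paper's proof: verifying the adjoint identity by pairing the forward equation with the backward solution $z$, integrating by parts in space and time (the latter via the Bochner-space identity, which the paper cites from Roub\'i\v cek), and using the homogeneous Neumann conditions, $u(0)=0$, $z(T)=\sum_k g_k o_k$, and the time-independence and support of $s$ to land on $\int_0^T \ResS z\,dt$. Your added remarks on well-posedness of the backward problem by time reversal and on justifying the weak-solution manipulations by density are sound refinements of the same argument.
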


\begin{proof}The restriction $\ResS$ is clearly well-defined. With $s\in X$, $\g\in\R^m$, write $u(T):=\Sc s\in L^2(\dom)$, implying $\dot{u}-\nabla\cdot(a\nabla u)=s$ in $\domS\times(0,T)$ and $0$ elsewhere. We write the inner product via partial integrations in Bochner spaces, that is
\begin{align*}
&\inner{\Finf s}{\g}{\R^m} = 
\sum_{k=1}^m
\inner{o_k}{\Sc s}{L^2(\dom)}\g_k = \sum_{k=1}^m\int_\dom u(T)\g_ko_k\d x\\
&=\int_{\dom}u(T)\left(
\sum_{k=1}^m \g_ko_k
\right)\,dx+\int_0^T\int_{\dom}u( -\dot{z} - \nabla\cdot(a\nabla z))\,dx\,dt\\
&=\int_{\dom}u(T)\left(
\left(
    \sum_{k=1}^m \g_ko_k
\right)
-z(T)\right)\,dx + \int_{\dom} u(0)z(0)\,dx+\int_0^T\int_{\dom} ( \dot{u} - \nabla\cdot(a\nabla u)) z\,dx\,dt \\
&= 0 + 0 + \dual{\int_0^T z\,dt}{s}{X} 
\end{align*}
where $z$ solves the adjoint equation \eqref{eq-adj}, as $s$ is independent of time and is supported on $\domS$.
Above, we make use of homogeneous boundary conditions of $u,z$ and the Bochner integral identity \cite[Lemma 7.3]{Roubicek}: $(u(T),z(T))_{L^2}-(u(0),z(0))_{L^2}=\int_0^T \langle\dot{u}(t),z(t)\rangle - \langle u(t),\dot{z}(t)\rangle\, dt$ for $u,z\in\Uc$. 

\end{proof}


While forward and adjoint evaluations $\Finf s$, $\Finf^*g$ both require solving time-dependent PDEs, they can both, especially in the context of OED, be viewed as mappings between finite element coefficients and vector observables (or vice versa). Explicitly, 
we replace them by their respective halves of the prior-preconditioned misfit Hessian, writing $F\in\R^{m\times n}$,
$$
\Ffin:=\Gmnih \Finf\Cph\Mbh, \qquad \Ffin^T=\Mbih\Cph\Finf^*\Gmnih.
$$ 
The half-powers of the mass matrix $\Mb$ of the source finite element space act as isometric isomorphisms between the finite element space and standard Euclidean space, see \cite{Aarset}. We obtained low-rank decompositions $\Ffin^T\approx QR$ and $\Ffin\approx R^TQ^T$, with $Q\in\R^{n\times\ell}$, $R\in\R^{\ell\times m}$, $\ell=50\ll\min\{m,n\}$ by repeated forward and adjoint evaluations with the randomised subspace iteration algorithm \cite{HalkoMartinssonTropp}. This choice of $\ell$ was made automatically, as it obtained a ratio of $10^{-12}$ between the largest and smallest approximate singular values of $\Ffin$. We remark that as the time-integration in the computation of the adjoint $\Finf^*$ via \eqref{eq-adj} must be done via numerical integration, and the randomised subspace iteration algorithm is quite sensitive to the numerical accuracy of the adjoint, the value of $\Delta t$ must be chosen low to ensure good approximation; however, once the low-rank decomposition has been computed, there is no additional cost associated with the low time-step size, as the low-rank form maps directly from source coefficients to final-time measurements without intermediate calculations at other time-steps.

\begin{lemma}[Algorithm \ref{alg:p_cont} -- Low-rank]\label{low-rank}
The low-rank approximation of the objective \eqref{eq:p_cont:objective} and gradient \eqref{eq:p_cont:grad} in Algorithm \ref{alg:p_cont} for A-optimal design \eqref{a-criterion} take the form
\begin{align}
    \Jc(\w) & = \tr\left(\Cp\right) - \tr\left(C\right) +
    \tr\left(
        \left(R\Diag(\w)R^T+I_\ell\right)^{-1}C
    \right), \\
    \nabla\Jc(\w) & = -\mathrm{ColumnNorm}\left(
        C^{1/2}\left(R\Diag(\w)R^T+I_\ell\right)^{-1}R
    \right),
\end{align}
where $C:=Q^T\Mbh\Cp\Mbih Q\in\R^{\ell\times\ell}$ and $\mathrm{ColumnNorm}$ maps a (finite-dimensional) matrix to the vector containing the norms of each of its columns. 
\end{lemma}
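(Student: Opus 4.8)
The plan is to reduce the operator-level A-optimality functional \eqref{a-criterion} to a finite, $\ell$-dimensional matrix expression in three stages --- prior preconditioning, low-rank substitution, and a Woodbury-type reduction --- after which the gradient follows by direct differentiation. First I would rewrite the posterior covariance in prior-preconditioned form: since $\Cc_0^{-1}=\Cph^{-1}\Cph^{-1}$ with $\Cph$ self-adjoint, one has $\Cpst(\w)=\Cph\bigl(\Cph\Finf^*\Gmnih\Diag(\w)\Gmnih\Finf\Cph+I\bigr)^{-1}\Cph$, so by cyclicity of the trace $\Jc(\w)=\tr\bigl((\tilde H(\w)+I)^{-1}\Cp\bigr)$ with $\tilde H(\w):=\Cph\Finf^*\Gmnih\Diag(\w)\Gmnih\Finf\Cph$. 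Passing to the finite element coefficient space and using that $\Mbh$ is an isometric isomorphism onto Euclidean space (the mass-matrix bookkeeping being exactly the content deferred to \cite{Aarset}), the preconditioned Hessian becomes the Euclidean matrix $\Ffin^T\Diag(\w)\Ffin$, by definition of $\Ffin=\Gmnih\Finf\Cph\Mbh$ and $\Ffin^T=\Mbih\Cph\Finf^*\Gmnih$, while the outer prior covariance is represented by $\hat\Cp:=\Mbh\Cp\Mbih$. This yields the matrix identity $\Jc(\w)=\tr\bigl((\Ffin^T\Diag(\w)\Ffin+I_n)^{-1}\hat\Cp\bigr)$, where $\tr(\hat\Cp)=\tr(\Cp)$ by similarity invariance of the trace.

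Next I would substitute the low-rank factorisation $\Ffin^T=QR$, $\Ffin=R^TQ^T$, so that $\Ffin^T\Diag(\w)\Ffin=QWQ^T$ with $W:=R\Diag(\w)R^T\in\R^{\ell\times\ell}$, and exploit that the randomised subspace iteration returns $Q$ with orthonormal columns, $Q^TQ=I_\ell$. The crux is the push-through identity $(I_n+QWQ^T)^{-1}=I_n-Q(I_\ell+W)^{-1}WQ^T$, valid precisely because $Q^TQ=I_\ell$, which collapses the $n\times n$ inverse to an $\ell\times\ell$ one. Taking traces against $\hat\Cp$ and using cyclicity gives $\Jc(\w)=\tr(\Cp)-\tr\bigl((I_\ell+W)^{-1}WC\bigr)$ with $C:=Q^T\hat\Cp Q=Q^T\Mbh\Cp\Mbih Q$; the elementary simplification $(I_\ell+W)^{-1}W=I_\ell-(I_\ell+W)^{-1}$ then splits this into the claimed three terms $\tr(\Cp)-\tr(C)+\tr\bigl((R\Diag(\w)R^T+I_\ell)^{-1}C\bigr)$.

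For the gradient only the last term depends on $\w$. Writing $r_k\in\R^\ell$ for the $k$-th column of $R$, so that $R\Diag(\w)R^T=\sum_k\w_k r_kr_k^T$ and $\partial_{\w_k}W=r_kr_k^T$, the identity $\partial_{\w_k}(W+I_\ell)^{-1}=-(W+I_\ell)^{-1}r_kr_k^T(W+I_\ell)^{-1}$ together with cyclicity yields $\partial_{\w_k}\Jc(\w)=-\,r_k^T(W+I_\ell)^{-1}C(W+I_\ell)^{-1}r_k=-\bigl\|C^{1/2}(W+I_\ell)^{-1}r_k\bigr\|_2^2$, i.e.\ minus the squared Euclidean norm of the $k$-th column of $C^{1/2}(R\Diag(\w)R^T+I_\ell)^{-1}R$, which is the stated $\mathrm{ColumnNorm}$ expression.

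I expect the genuine obstacle to be not the trace algebra, which is routine once the right form is in hand, but the discretisation bookkeeping in the first stage: tracking the primal/dual pairing of $\Cc_0:X^*\to X$ and the correct half-powers of $\Mb$ so that the transpose $\Ffin^T$ and the covariance matrix $C=Q^T\Mbh\Cp\Mbih Q$ emerge with exactly the orientation stated, which I would handle by appealing to the isometry framework of \cite{Aarset}. A secondary point to reconcile is the square appearing in the gradient: the differentiation produces squared column norms, so $\mathrm{ColumnNorm}$ is to be read as returning the squared Euclidean norm of each column.
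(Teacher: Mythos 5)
Your derivation is correct and complete, but note that the paper itself gives no argument here: its entire proof is the citation to \cite[Theorem 8]{Aarset}, so what you have written is a self-contained reconstruction of that external result rather than an alternative to an in-paper argument. Your three-stage route --- prior preconditioning $\Cpst(\w)=\Cph(\tilde H(\w)+I)^{-1}\Cph$ with a cyclic trace, substitution of $\Ffin^T\Diag(\w)\Ffin=QWQ^T$ with $W=R\Diag(\w)R^T$, and the push-through identity $(I_n+QWQ^T)^{-1}=I_n-Q(I_\ell+W)^{-1}WQ^T$ relying on $Q^TQ=I_\ell$, followed by $(I_\ell+W)^{-1}W=I_\ell-(I_\ell+W)^{-1}$ --- is exactly the mechanism that makes the $\ell\times\ell$ reduction work, and your gradient computation via $\partial_{\w_k}(W+I_\ell)^{-1}=-(W+I_\ell)^{-1}r_kr_k^T(W+I_\ell)^{-1}$ is sound (it does require $C$ symmetric positive semidefinite so that $C^{1/2}$ exists and $r_k^T(W+I_\ell)^{-1}C(W+I_\ell)^{-1}r_k=\|C^{1/2}(W+I_\ell)^{-1}r_k\|_2^2$; this holds because $\Mbh\Cp\Mbih$ is the $\Mb$-symmetric covariance conjugated into Euclidean coordinates, which is worth stating explicitly). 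Your two flagged caveats are both legitimate: the mass-matrix/dual-pairing bookkeeping is indeed the only part genuinely deferred to \cite{Aarset}, and the gradient really does produce \emph{squared} column norms, so the lemma's verbal definition of $\mathrm{ColumnNorm}$ as ``the norms of each of its columns'' is only consistent with your (correct) computation if read as squared norms --- a discrepancy in the paper's statement, not in your proof.
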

\begin{proof}
We refer to \cite[Theorem 8]{Aarset}.  
\end{proof}

\begin{remark}
With the low-rank algorithm in Lemma \ref{low-rank}, the redundant-dominant $p$-continuation algorithm \ref{alg:p_cont} can thus be carried out efficiently, and unlike e.g.~the greedy algorithm, can be computed fully in parallel.    
\end{remark}

\section{Numerical results}

To obtain our numerical results, we take a FEM discretisation of $X=L^2(\dom)$ as a second-order finite element space with $n_\mathrm{co}=17727$ degrees of freedom, containing $X$ as a subset with $n=5289$ degrees of freedom. Simultaneously, we obtain FEM discretisations of the forward and adjoint maps $\Finf$, $\Finf^*$ via the \textsc{NGSolve} software's \textsc{ngsxfem} integration and the associated time-space finite element spaces \cite{Schoberl}, taking time-steps of $\Delta t=10^{-4}$ over the time interval $T=[0,1]$.

As prior for our Bayesian inversion, we choose $f\sim\mathcal{N}(0,\Cc_0)$, where $\Cc_0:=(-\alpha\Delta + I)^{-2}$ is the densely defined bounded linear inverse bilaplacian operator on $L^2(\domS)$, for both inversions endowed with the Robin boundary condition $\frac{\partial}{\partial n}u=\frac{\sqrt{\alpha}}{1.42}u$, $\alpha=1/4$; this prior is chosen as a moderately smoothing, trace class prior, whose boundary condition is known to lead to spatially uniform prior variance; see \cite{DaonStadler}.

Noise on the finite measurement was realised as $1\%$ of the average variance of one thousand data samples $(\Finf s^i)_{i=1}^n$, with each $s^i$ drawn from the prior distribution, echoing the strategy employed in \cite{Aarset}.

Our low-rank Algorithm \ref{alg:p_cont} within Lemma \ref{low-rank} deliver the sequence $(\overline{\w}^{m_0})_{m_0=1}^{36}\subset\{0,1\}^m$ of binary approximate A-optimal designs. Figure \ref{fig:designs} visualises the optimal designs versus the \emph{posterior pointwise variance field} $c:\Om\to\R$ $c(\x):=\left[\Cpst(w)\delta_{\x}\right](\x)$; this is an excellent visualisation of the uncertainty reduction provided by the design due to the relationship $\tr(\Cpst(\w))=\int_\Om c(x)\d x$, see \cite{Mercer}.

\begin{figure}
    \centering
    \includegraphics[width=0.45\linewidth]{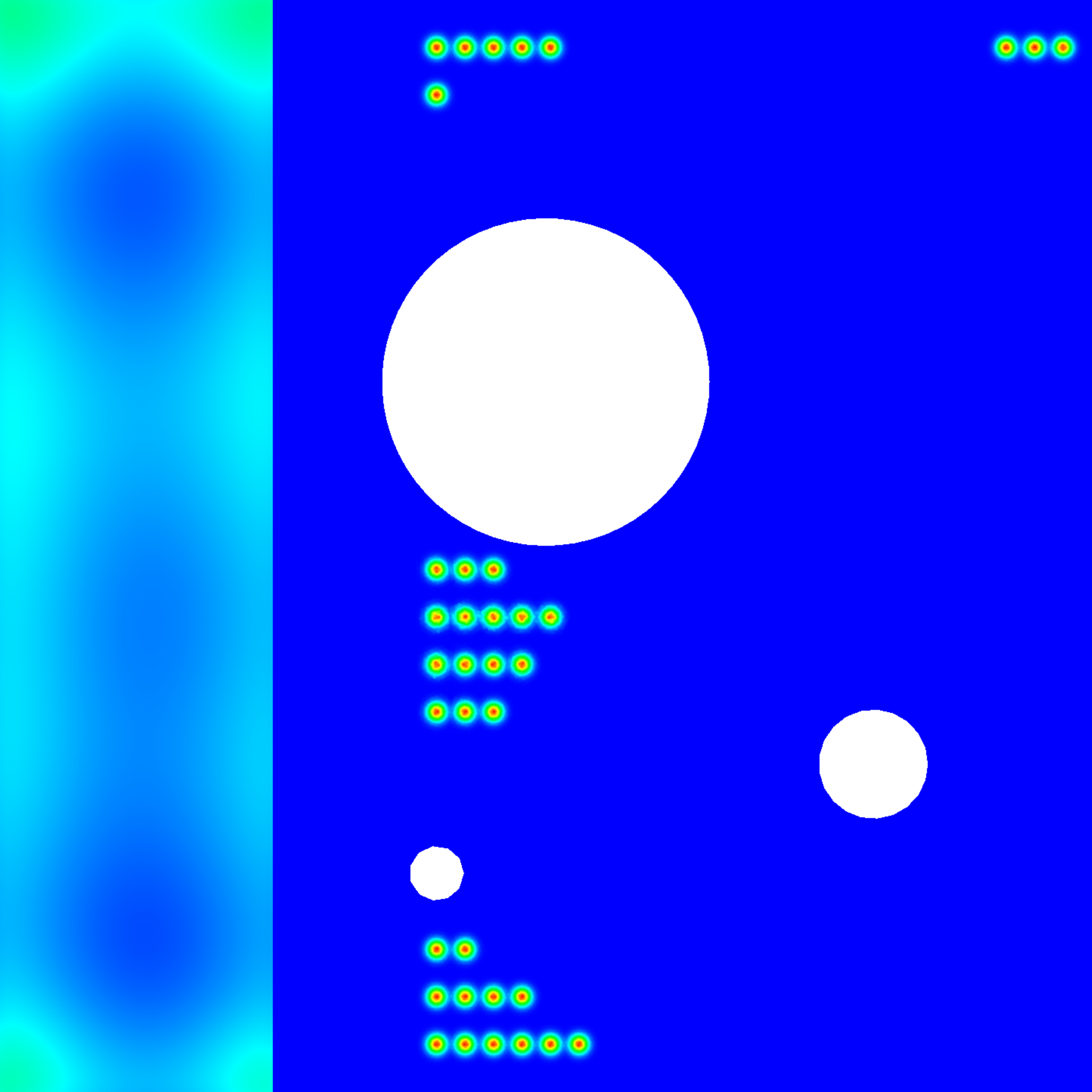}
    \caption{Optimal experimental  for $m_0=36$, plotted together with its corresponding pointwise variance field.}
    \label{fig:designs}
\end{figure}

As a point of comparison, Figure \ref{fig:objective_comparison} displays the A-optimality of the sequence $(\overline{\w}^{m_0})_{m_0=1}^{36}$ compared to that of random designs: For each $m_0$, $10^3$ random designs were drawn, and their A-optimalities are plotted in red. 

\begin{figure}[H]
\centering
\begin{tikzpicture}[scale=0.8]
\begin{axis}[
	no markers, 
	xmin = 1, 
	xmax = 36,
	enlargelimits = 0.01, 
	xlabel = {target number of sensors $m_0$}, 
	xlabel style = {font = \large}, 
	ylabel = {A-optimality $\Jc$}, 
	ylabel style = {font = \large}, 
	ylabel shift=-0pt, 
	xlabel shift=-0pt,
 	xshift=-20pt,
	legend style={at={(0.42,0.8)},anchor=south west}
]

\addplot[color=red!40, name path=maxes, forget plot] table[x=targets,y=randommax, col sep=comma]{graphics/Aoptimalities.csv};

\addplot[color=red!40, name path=mins, forget plot] table[x=targets,y=randommin, col sep=comma]{graphics/Aoptimalities.csv};

\addplot[color=OrangeRed!40]fill between[of=maxes and mins];

\addlegendentry{Random designs}

\addplot[very thick, color=ForestGreen, name path=ws] table[x=targets,y=w, col sep=comma]{graphics/Aoptimalities.csv};

\addlegendentry{Algorithm 1 output}

\addplot[very thick, color=blue, name path=w0s] table[x=targets,y=w1, col sep=comma]{graphics/Aoptimalities.csv};

\addlegendentry{Lower bound}

\end{axis}
\end{tikzpicture}
\caption{Blue: Theoretical lower bound for A-optimality via \cite[Thm.~3]{Aarset}. Green: Proposed binary A-optimal designs. Red: $10^3$ random designs.}
\label{fig:objective_comparison}
\end{figure}
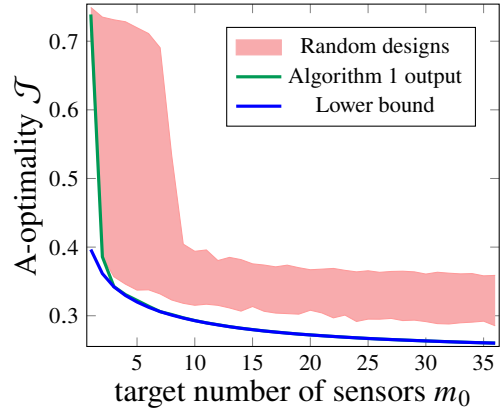

To study one of the designs more in depth, we fix $m_0=36$. 
Figure \ref{fig:MAPs} shows the maximum a posteriori estimates of $s$ from noisy  measurement data $\g:=\Finf_{\w} s + \e$, $\e\sim\Nc(0,\Gmn)$ with the artificial source $s$ given as
$$
\begin{cases}
    \exp(-1/r_{\pm}(\x)^{1/8}), & r_{\pm}(\x) > 0, \\
    0, & \text{else,}
\end{cases} \quad r_{\pm}(\x):=\left(\frac{3\pi}{40}\right)^2-(\x_1+0.75)^2-(\x_2 \pm 0.7)^2;
$$
on the left, $\w:=\overline{\w}^{m_0}$, that is, our proposed approximate optimal design, while on the right, $\w:=\w_{\mathrm{RNG}}^{m_0}$ is set equal to the best random design for $m_0=36$. The relative errors in the reconstructions, are, respectively, $\|s-s_{\overline{\w}^{m_0}}\|_2/\|s\|\approx 0.6491$ and $\|s-s_{\w_{\mathrm{RNG}}^{m_0}}\|_2/\|s\|\approx 0.6782$ for the two reconstructions; while both errors are large, which is not unexpected with such limited measurement data, the reconstructions are qualitatively good, and our proposed solution attains $4.3\%$ lower relative error than the random design.

\begin{figure}
    \centering
    \includegraphics[width=0.32\linewidth]{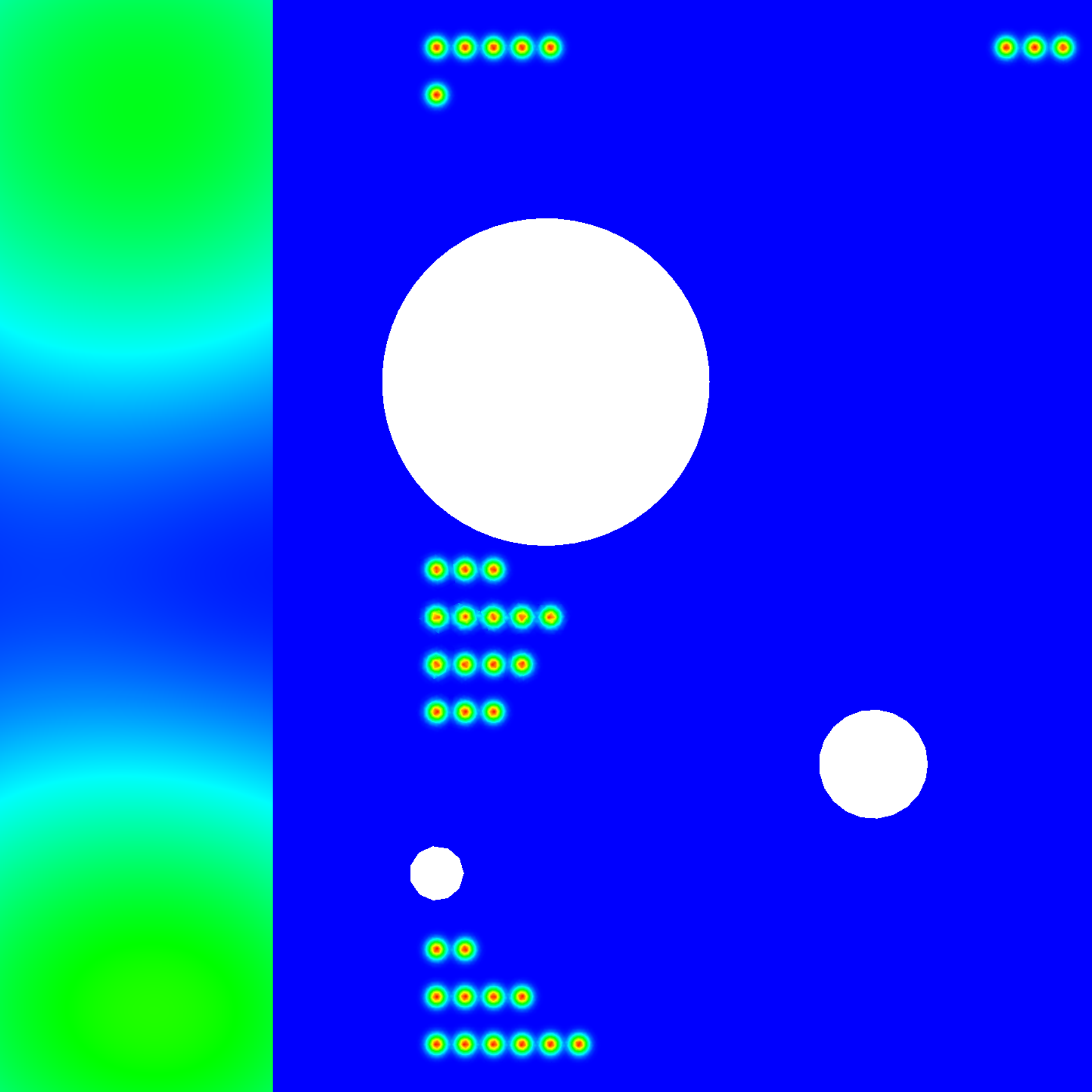}
    \includegraphics[width=0.32\linewidth]{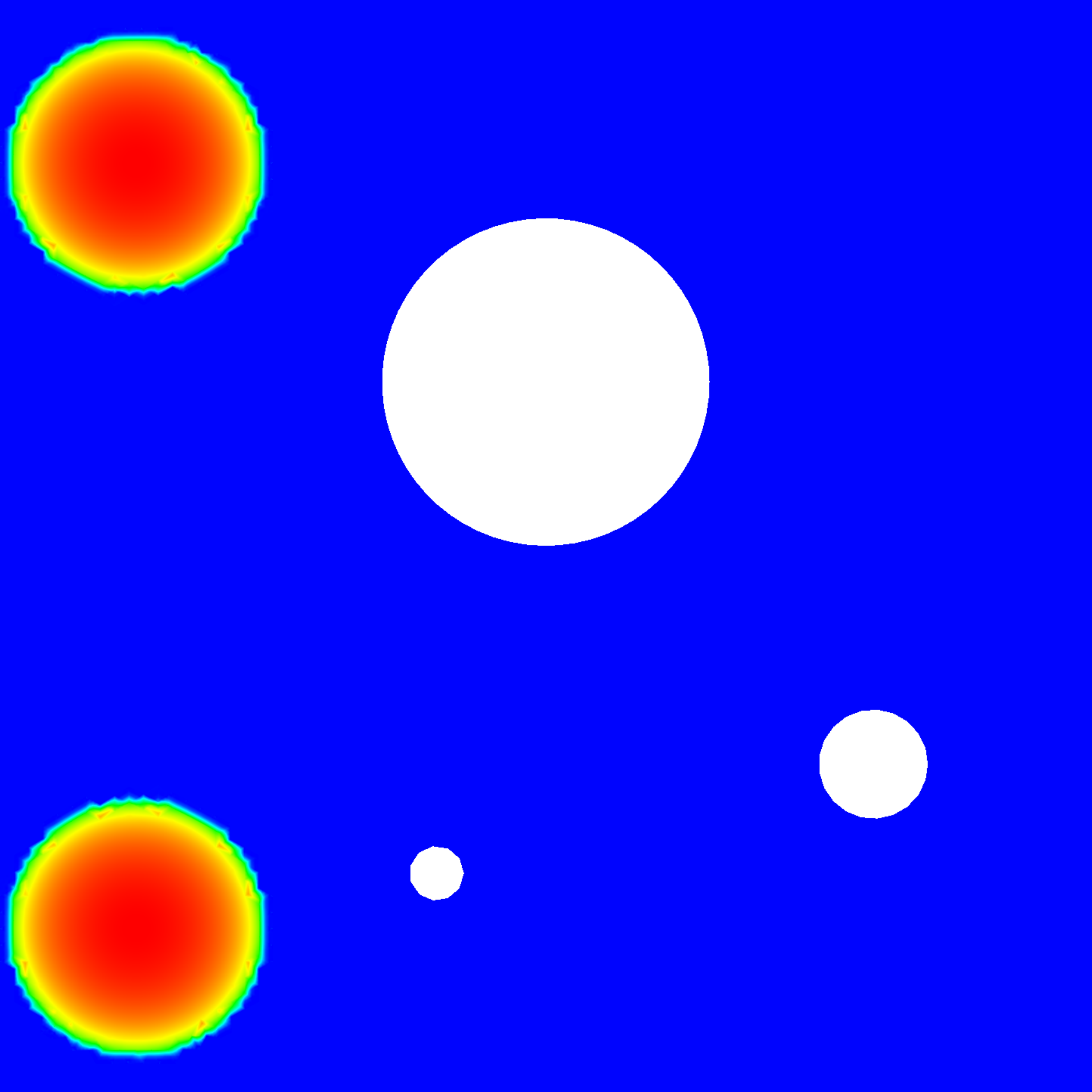}
    \includegraphics[width=0.32\linewidth]{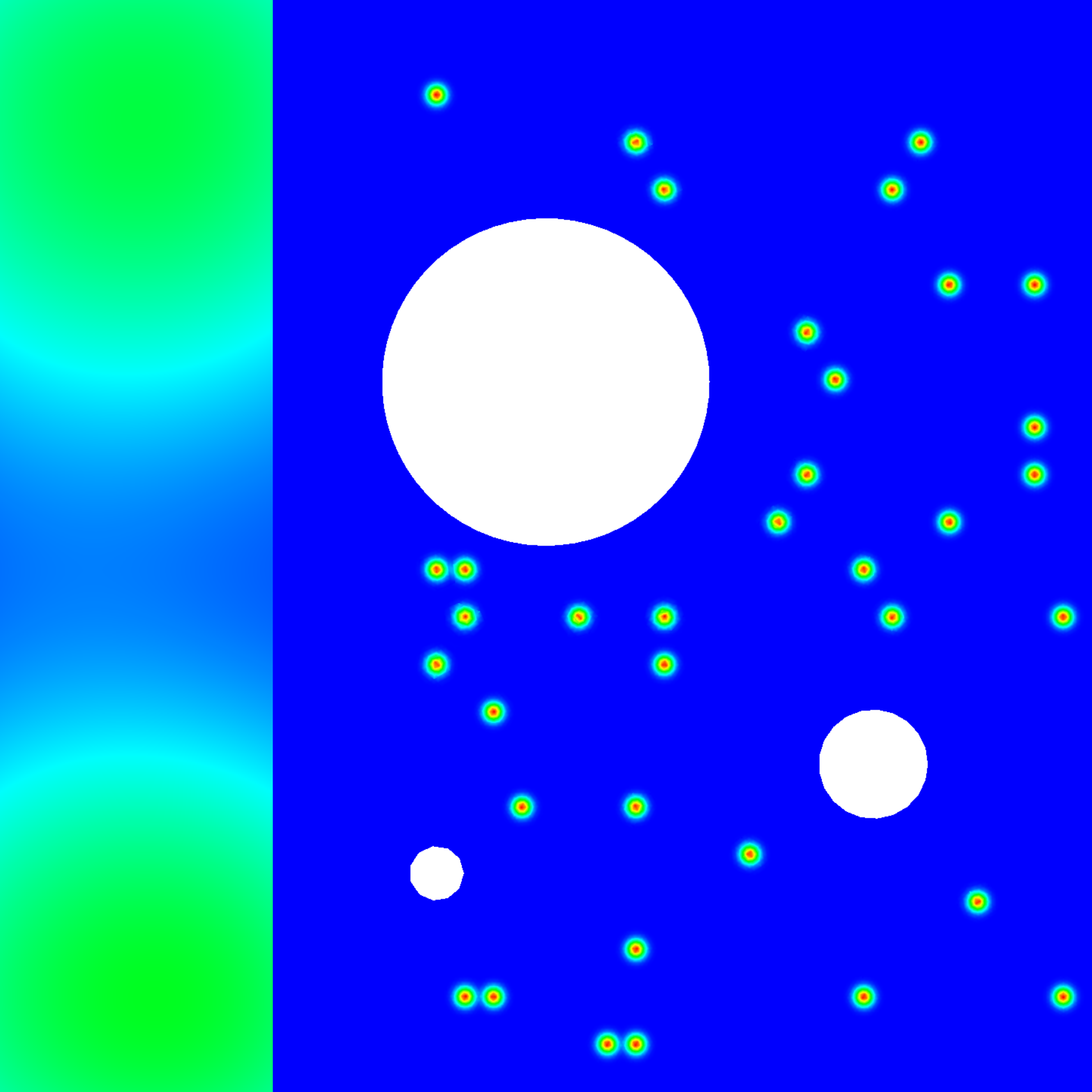}
    \caption{Left: Maximum a posteriori estimate $s_{\overline{\w}^{m_0}}$ with proposed design. Center: True $s$. Right: Maximum a posteriori estimate $s_{\w_{\mathrm{RNG}}^{m_0}}$ with random design.}
    \label{fig:MAPs}
\end{figure}

\section{Conclusions}

In this work, we have demonstrated that the low-rank-based redundant-dominant $p$-continuation algorithm can be applied to obtain binary A-optimal designs for time-dependent PDE-based inverse problems, producing high-quality designs in relatively low time. This suggests the general validity of the method proposed in \cite{Aarset} to a broad class of physically relevant equations, significantly widening the scope of its applicability. However, it is also clear that more informative data is required to obtain good reconstructions with only a small number of sensor placements; as such, the authors next intend to consider also scenarios where multiple measurements are made in each sensor over different time points, in accordance to the setting of \cite[Thm.~17]{Aarset}, particularly in the context of e.g.~trace data. To further accelerate the process of identifying optimal designs, the authors moreover plan on applying the bi-level algorithms  \cite{nguyen-seq, nguyen-seqbi} to obtain further gains in terms of computational speed and robustness.

\begin{acknowledgement}
The authors acknowledge support from the DFG through Grant 432680300 - SFB 1456 (C04).
\end{acknowledgement}
%
%
%

\end{document}